\documentclass{amsart}
%
%
%

\usepackage{amsfonts}
\usepackage{amsmath,amscd}
\usepackage{amssymb}
\usepackage{amsthm}
\usepackage{newlfont}
\newcommand{\f}{\frac}

\newcommand{\ds}{\displaystyle}

 \newtheorem{thm}{Theorem}[section]
 \newtheorem{cor}[thm]{Corollary}
 \newtheorem{lem}[thm]{Lemma}
 \newtheorem{prop}[thm]{Proposition}
 \theoremstyle{definition}
 \newtheorem{defn}[thm]{Definition}
 \theoremstyle{remark}

 \newtheorem{ex}[thm]{Example}
 
 \numberwithin{equation}{section}

\begin{document}

\title[The exterior degree of a pair of finite groups]
 {The exterior degree of a pair of finite groups}

\author[P. Niroomand]{Peyman Niroomand}
\address{School of Mathematics and Computer Science\\
Damghan University, Damghan, Iran}
\email{p$\_$niroomand@yahoo.com, niroomand@du.ac.ir}
\author[R. Rezaei]{Rashid Rezaei}
\address{Department of Mathematics, Faculty of Sciences, Malayer University, Malayer,
Iran}

\thanks{\textit{Mathematics Subject Classification 2010.}  20F99, 20P05.}


\keywords{ Exterior degree, commutativity degree, Schur multiplier, capability of groups, pair of groups.}

\date{\today}


\begin{abstract}
The exterior degree of a pair of finite groups $(G,N)$, which is a generalization of the exterior degree of finite groups, is the probability for two elements $(g,n)$ in $(G,N)$ such that
$g\wedge n=1$. In the present paper, we state some relations between this concept and the relative commutatively degree, capability and the Schur multiplier of a pair of groups.

\end{abstract}

\maketitle

\section{Introduction}



Let $G$ be a group with a normal subgroup $N$, then $(G,N)$ is said to be a pair of groups. Let $G$ and $N$ acting on each other and themselves by
conjugation, remember that \cite{loday, brow} the non-abelian tensor product $G\otimes N$ is  the group
generated by the symbols $g\otimes n$ subject to the relations
\begin{equation*}
gg^{'}\otimes n=(~^gg^{'}\otimes ~^gn)(g\otimes n),
\end{equation*}
\begin{equation*}
g\otimes nn^{'}=(g\otimes n)(~^ng\otimes ~^nn^{'}),
\end{equation*}
for all $g, g^{'}$ in $G$ and $n,n^{'}$ in $N$.\\
As it mentioned in \cite{loday2}, a relative
central extension of the pair $(G,N)$ consists of a group homomorphism $\sigma: M\rightarrow G$ together with an action of $G$ on $M$, such that:
\begin{itemize}
\item[(i)] $\sigma(M) = N$;
\item[(ii)]  $\sigma(^gm) = g^{-1}\sigma(m)g$, for all $g\in G$ and $m\in M$;
\item[(iii)] $\ds^{\sigma(m_1)} m = m_1^{-1}m m_1$, for all $m,m_1 \in M$;
\item[(iv)]  $ker\sigma \subseteq Z_G(M),$
\end{itemize}
in which
\[Z_G(M)=\{m\in M ~|^gm=m ~\text{for all g in}~G\}.\]
The pair $(G, N)$ is called capable if it admits a central extension such that $ker\sigma=Z_G(M)$.
In particular if $M=N$ and $G$ acts on $N$ by conjugation,  $Z_G(N)$ is denoted by $Z(G,N)$.

The exterior product $G\wedge N$ is obtained from $G\otimes N$ by imposing the additional relation  $x\otimes x$ for all $x$ in $N$ and the image of $g\otimes n$ is denoted by $g\wedge n$ for all $g\in G, n \in N$.
By using the notations in \cite{Ellis2}, the exterior $G$-centre of $N$ is a central subgroup of $N$ which is defined as follows \[Z^{^\wedge} (G,N)=\{n\in N~|~1=g\wedge n\in G\wedge N~\text{for all g in}~ G \}.\] Already \cite[Theorem 3]{Ellis2} shows $Z^{^\wedge} (G,N)$ allows us to decide
when $(G,N)$ is capable. More precisely a pair $(G,N)$ is capable if and only if $Z^{^\wedge} (G,N)=1$. It can be checked that
$Z^{^\wedge} (G,N)=\ds\bigcap_{x\in G}{}_NC^{^\wedge}(x)$, in which ${}_NC^{^\wedge}(x)$ is the exterior
centralizer of $x$ in the pair of groups $(G,N)$ and it is equal to the set of all elements
$n\in N$ such that $1=x\wedge n\in G\wedge N $. Also for all $x\in N$ we denote ${}_GC^{^\wedge}(x)$ the set of all elements $g\in G$ such that $1=g\wedge x\in G\wedge N$.  In the case for which $N=G$ and $x\in G$, $Z^{^\wedge} (G,G)=Z^{^\wedge} (G)$ and ${}_GC^{^\wedge}(x)=C_{G}^{^\wedge}(x)$ are the exterior centralizer of $x$ in $G$ and the exterior centre of $G$, respectively.

The commutator map $\kappa: G\wedge N \rightarrow[G,N]$ is the group homomorphism defined on the generators by
 $g\wedge n\mapsto [g,n]=gng^{-1}n^{-1}$ $(g\in G, n\in N)$. The Kernel of $\kappa$ is the Schur multiplier of a pair of groups, $\mathcal{M}(G,N)$, which was stated by Brown, Loday \cite{loday} and Ellis in \cite{Ellis}.
\section{Known results on the relative commutativity degree and exterior degree}
In this section, some known results are remembered for the relative commutativity degree and exterior degree.
For any pair of finite groups $(G,N)$, we use the notation of  \cite{Erfanian} and define the commutativity degree of pair as follows
 \[\ds d(G,N)=\f1{|G||N|} |\{(x,y)\in G\times N~|~xy=yx\}|.\]
Obviously, if $G=N$, then $d(G,G)=d(G)$, the commutativity degree of a finite group $G$, and $d(G,N)=1$ if and only if $N\subseteq Z(G)$.
\begin{thm}\label{le2}\cite[Theorem 3.9]{Erfanian}
Let $H$ and $N$ be two subgroups of $G$ such that $N\unlhd G$ and $N\subseteq H.$
Then
\[d(G,H)\leq d(G/N,H/N)d(N),\] and if $N\cap [H, G]=1$, then the equality holds.
\end{thm}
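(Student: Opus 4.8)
\emph{Proof strategy.} For subgroups $A,B$ of a group write $P(A,B)=\{(x,y)\in A\times B:\ [x,y]=1\}$, so that $d(G,H)=|P(G,H)|/(|G||H|)$, $d(G/N,H/N)=|P(G/N,H/N)|/(|G/N||H/N|)$ and $d(N)=|P(N,N)|/|N|^{2}$. Since $|G/N|\,|H/N|\,|N|^{2}=|G||H|$, the asserted inequality is equivalent to the combinatorial statement $|P(G,H)|\le |P(G/N,H/N)|\cdot|P(N,N)|$, and this is what I would prove. The plan is to use the canonical surjection $\pi\colon G\times H\to (G/N)\times(H/N)$: any commuting pair in $G\times H$ maps to a commuting pair in $(G/N)\times(H/N)$, so $P(G,H)$ is the disjoint union of the fibres $\pi^{-1}(xN,yN)\cap P(G,H)$ over the pairs $(xN,yN)\in P(G/N,H/N)$. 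It therefore suffices to show that each such fibre has at most $|P(N,N)|$ elements.

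To bound a fibre, fix a commuting coset pair whose fibre is non-empty and choose a lift $(a,b)$ with $a\in G$, $b\in H$ and $[a,b]=1$ (possible precisely because the fibre is non-empty). Since $N\unlhd G$, conjugation by $a$ and by $b$ give automorphisms $\alpha,\beta$ of $N$, and $(n_{1},n_{2})\mapsto(an_{1},bn_{2})$ identifies the fibre with
\[
S=\{(n_{1},n_{2})\in N\times N:\ \beta(n_{1})\,n_{2}=\alpha(n_{2})\,n_{1}\},
\]
the defining relation being $[an_{1},bn_{2}]=1$ rewritten using $ab=ba$. For a fixed $n_{1}$ this equation reads $\alpha(n_{2})\,n_{1}\,n_{2}^{-1}=\beta(n_{1})$; I would observe that the map $n_{2}\mapsto\alpha(n_{2})\,n_{1}\,n_{2}^{-1}$ on $N$ sends two elements to the same value exactly when they lie in the same left coset of $C_{N}(an_{1})$, so the number of admissible $n_{2}$ is at most $|C_{N}(an_{1})|$. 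Summing over $n_{1}\in N$ gives
\[
|S|\ \le\ \sum_{n_{1}\in N}|C_{N}(an_{1})|\ =\ \bigl|\{(g,n)\in aN\times N:\ [g,n]=1\}\bigr|\ \le\ \sum_{n\in N}|C_{N}(n)|\ =\ |P(N,N)|,
\]
the last inequality because $|aN\cap C_{G}(n)|\le|C_{N}(n)|$ for every $n\in N$. This establishes the fibre bound and hence the inequality; the step I expect to be the main obstacle is arranging this centralizer/fibre bookkeeping cleanly, everything else being formal.

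For the equality I would argue as follows. If $N\cap[H,G]=1$ then, since $N\subseteq H$, we get $[N,G]\subseteq N\cap[H,G]=1$, so $N\subseteq Z(G)$ and the automorphisms $\alpha,\beta$ above are trivial. Consequently, for every commuting coset pair $(xN,yN)$ and all $n_{1},n_{2}\in N$, $[xn_{1},yn_{2}]=[x,y]\in N\cap[H,G]=1$; hence the whole coset $xN\times yN$ lies in the fibre, so every fibre over a point of $P(G/N,H/N)$ has exactly $|N|^{2}=|P(N,N)|$ elements (the last equality as $N$ is then abelian). Thus $|P(G,H)|=|P(G/N,H/N)|\cdot|N|^{2}$, and dividing by $|G||H|$ yields $d(G,H)=d(G/N,H/N)=d(G/N,H/N)\,d(N)$ since $d(N)=1$.
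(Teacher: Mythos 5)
Your proof is correct. Note that the paper itself gives no argument for this statement --- it is imported verbatim as Theorem 3.9 of the cited work of Erfanian, Rezaei and Lescot --- so there is no in-paper proof to compare against; your write-up is a valid self-contained derivation of the quoted result. The key steps all check: the fibre of $\pi$ over a commuting pair $(xN,yN)$ with a commuting lift $(a,b)$ is identified with $S=\{(n_1,n_2): \beta(n_1)n_2=\alpha(n_2)n_1\}$, and for fixed $n_1$ the map $n_2\mapsto \alpha(n_2)n_1n_2^{-1}$ is indeed constant exactly on left cosets of $C_N(an_1)$ (since $n_2(an_1)n_2^{-1}=n_2'(an_1)n_2'^{-1}$ iff $n_2'^{-1}n_2\in C_G(an_1)\cap N$), giving the bound $|S|\le\sum_{n_1\in N}|C_N(an_1)|$; the final comparison with $|P(N,N)|$ works because $aN\cap C_G(n)$ is empty or a single coset of $C_N(n)$. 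The equality case is also handled correctly: $N\cap[H,G]=1$ forces $[N,G]=1$, hence $N\le Z(G)$, every commuting coset pair lifts to a full $|N|^2$-element fibre of commuting pairs, and $d(N)=1$. This fibre-counting over the quotient is essentially the standard route to such multiplicativity bounds for commutativity degrees, so nothing here is at odds with the source.
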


The concept of exterior degree of finite group, $d^{\wedge}(G)$, is defined in  \cite{pr} as the probability for two elements $g$ and $g'$ in $G$ such that $g\wedge g'=1$.

It is seen that if  $k(G)$ is the number of conjugacy classes of $G$, then
$|G|^2\ds d^{^\wedge}(G)=\sum_{i=1}^{k(G)}\sum_{x\in C_i}|C_G^{^\wedge}(x)|,$   where $C_1,...,C_{k(G)}$ are the conjugacy classes of $G$.

The following two inequalities in \cite{pr} will be generalized in the next section.
\begin{thm}\label{pr1}\cite[Theorem 2.3]{pr}For every finite group $G$,
\[\f{d(G)}{|\mathcal{M}(G)|}+\f{|Z^{^\wedge}(G)|}{|G|}(1-\f{1}{|\mathcal{M}(G)|})\leq d^{^\wedge}(G)\leq d(G)-(\f{p-1}p)(\f{|Z(G)|-|Z^{^\wedge}(G)|}{|G|}),\]
where $p$ is the smallest prime number dividing the order of
$G$.\end{thm}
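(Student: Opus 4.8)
The plan is to argue from the class-function identity $|G|^{2}d^{\wedge}(G)=\sum_{x\in G}|C_{G}^{\wedge}(x)|$ recalled above, together with the classical $|G|^{2}d(G)=\sum_{x\in G}|C_{G}(x)|$, and to compare the two summands term by term after splitting $G$ into the three blocks $Z^{\wedge}(G)$, $Z(G)\setminus Z^{\wedge}(G)$ and $G\setminus Z(G)$. On $Z^{\wedge}(G)$ one has $C_{G}^{\wedge}(x)=G=C_{G}(x)$; off $Z(G)$ one uses only the trivial inclusion $C_{G}^{\wedge}(x)\subseteq C_{G}(x)$; so the whole theorem reduces to two facts: (a) $|C_{G}^{\wedge}(x)|\ge |C_{G}(x)|/|\mathcal{M}(G)|$ for every $x\in G$, and (b) $|C_{G}^{\wedge}(x)|\le |G|/p$ for $x\in Z(G)\setminus Z^{\wedge}(G)$, where $p$ is the smallest prime dividing $|G|$.

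For (a) I would first observe that $C_{G}^{\wedge}(x)$ is a subgroup of $C_{G}(x)$, which is cheap: for $g,h\in C_{G}^{\wedge}(x)$ we have ${}^{g}x=x$, so $gh\wedge x=({}^{g}h\wedge x)(g\wedge x)$ equals ${}^{g}(h\wedge x)=1$ since $g\wedge x=1$, and the relation $1\wedge x=({}^{g}g^{-1}\wedge x)(g\wedge x)$ (using ${}^{g}g^{-1}=g^{-1}$, ${}^{g}x=x$) forces $g^{-1}\wedge x=(g\wedge x)^{-1}=1$. Next, since $\kappa(g\wedge x)=[g,x]=1$ for $g\in C_{G}(x)$, the rule $gC_{G}^{\wedge}(x)\mapsto g\wedge x$ is a well-defined map $C_{G}(x)/C_{G}^{\wedge}(x)\to\mathcal{M}(G)=\ker\kappa$. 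To see it is injective I would invoke the standard structural facts that $\mathcal{M}(G)$ is central in $G\wedge G$ and that $G$ acts trivially on $\mathcal{M}(G)$ (see \cite{brow, Ellis, Ellis2}): if $g\wedge x=g'\wedge x$ then $g^{-1}g'\wedge x={}^{g^{-1}}(g'\wedge x)(g\wedge x)^{-1}=(g'\wedge x)(g\wedge x)^{-1}=1$, so $g^{-1}g'\in C_{G}^{\wedge}(x)$. Since $G$ is finite, $\mathcal{M}(G)$ is finite, and this injection yields $[C_{G}(x):C_{G}^{\wedge}(x)]\mid|\mathcal{M}(G)|$, i.e.\ (a). For (b): if $x\in Z(G)\setminus Z^{\wedge}(G)$ then $C_{G}(x)=G$ while $C_{G}^{\wedge}(x)\ne G$ (equality would put $x$ in $Z^{\wedge}(G)$), so $C_{G}^{\wedge}(x)$ is a proper subgroup and $|C_{G}^{\wedge}(x)|\le |G|/p$ by Lagrange.

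Granting (a) and (b), the lower bound follows from
\[
|G|^{2}d^{\wedge}(G)=\sum_{x\in Z^{\wedge}(G)}|G|+\sum_{x\notin Z^{\wedge}(G)}|C_{G}^{\wedge}(x)|\ \ge\ |Z^{\wedge}(G)|\,|G|+\frac{1}{|\mathcal{M}(G)|}\Bigl(|G|^{2}d(G)-|Z^{\wedge}(G)|\,|G|\Bigr),
\]
where I used $\sum_{x\notin Z^{\wedge}(G)}|C_{G}(x)|=|G|^{2}d(G)-|Z^{\wedge}(G)|\,|G|$ (valid because $C_{G}(x)=G$ on $Z^{\wedge}(G)\subseteq Z(G)$); dividing by $|G|^{2}$ and grouping the $|Z^{\wedge}(G)|/|G|$ terms gives exactly $\frac{d(G)}{|\mathcal{M}(G)|}+\frac{|Z^{\wedge}(G)|}{|G|}\bigl(1-\frac{1}{|\mathcal{M}(G)|}\bigr)$. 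For the upper bound I would instead bound
\[
|G|^{2}d^{\wedge}(G)\le\sum_{x\in Z^{\wedge}(G)}|G|+\sum_{x\in Z(G)\setminus Z^{\wedge}(G)}\frac{|G|}{p}+\sum_{x\notin Z(G)}|C_{G}(x)|,
\]
and, using $\sum_{x\in Z(G)}|C_{G}(x)|=|Z(G)|\,|G|$, a short rearrangement identifies the right-hand side with $|G|^{2}d(G)-\frac{p-1}{p}\bigl(|Z(G)|-|Z^{\wedge}(G)|\bigr)|G|$; dividing by $|G|^{2}$ gives the stated upper estimate.

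The only genuinely nontrivial step is the injectivity in (a) — equivalently the index bound $[C_{G}(x):C_{G}^{\wedge}(x)]\le|\mathcal{M}(G)|$ — and this is precisely where one leans on the finer structure of the non-abelian exterior product, namely the centrality of $\mathcal{M}(G)$ in $G\wedge G$ and the triviality of the conjugation $G$-action on it. Everything downstream of that is bookkeeping with the two class sums above, so I expect that index bound to be the crux of the write-up.
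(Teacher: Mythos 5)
Your proposal is correct and follows essentially the same route as the paper's proof of its generalization (Theorem 3.7): the crux in both is the embedding of $C_G(x)/C_G^{\wedge}(x)$ into $\mathcal{M}(G)$ via $g\mapsto g\wedge x$ (the paper's Lemma 3.6), the observation that $[G:C_G^{\wedge}(x)]\geq p$ off $Z^{\wedge}(G)$, and the same three-block split of the class sum. Your only deviation is summing over elements rather than over conjugacy-class representatives weighted by $|C_G^{\wedge}(x_i)|/|C_G(x_i)|$, which is an equivalent bookkeeping choice.
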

In the case for which $G$ is a capable group, the above upper bound for $d^{^\wedge}(G)$  can be improved as follows.
\begin{thm}\label{pr2}\cite[Theorem 2.8]{pr} Let $G$ be a non-abelian capable group and $p$ be the smallest prime number dividing the order of
 $G$, then $d^{^\wedge}(G)\leq1/p.$\end{thm}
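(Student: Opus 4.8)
The plan is to run a conjugacy-class count on the identity
\[
|G|^{2}d^{^\wedge}(G)=\sum_{i=1}^{k(G)}|C_{i}|\,|C_{G}^{^\wedge}(x_{i})|,
\]
where $C_{1},\dots,C_{k(G)}$ are the conjugacy classes of $G$ with representatives $x_{i}$ (one checks that $|C_{G}^{^\wedge}(x)|$ is constant along a class). I would first recall the standard facts that $C_{G}^{^\wedge}(x)$ is a subgroup of $C_{G}(x)$ and that $g\wedge x=1$ iff $x\wedge g=1$ in $G\wedge G$, and note that since $G$ is capable we have $Z^{^\wedge}(G)=1$ by \cite[Theorem 3]{Ellis2}, so $C_{G}^{^\wedge}(x)=G$ exactly when $x=1$.

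Next, split the classes. Always $|C_{i}|\,|C_{G}^{^\wedge}(x_{i})|\le|C_{i}|\,|C_{G}(x_{i})|=|G|$, and if $C_{G}^{^\wedge}(x_{i})$ is a \emph{proper} subgroup of $C_{G}(x_{i})$ its index is at least $p$, giving $|C_{i}|\,|C_{G}^{^\wedge}(x_{i})|\le|G|/p$. Let $A=\{\,i:C_{G}^{^\wedge}(x_{i})=C_{G}(x_{i})\,\}$ and let $B$ be its complement. Then $|G|^{2}d^{^\wedge}(G)\le|A|\,|G|+|B|\,|G|/p$, so, using $|A|+|B|=k(G)$, the target inequality $d^{^\wedge}(G)\le1/p$ becomes the purely arithmetic claim $(p-1)|A|\le|G|-k(G)$.

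To get this, observe that the class of $1$ lies in $A$, while any other class in $A$ is non-central: a class $\{x\}$ with $x\in Z(G)\setminus\{1\}$ would give $C_{G}^{^\wedge}(x)=C_{G}(x)=G$, i.e. $x\in Z^{^\wedge}(G)=1$. So $|A|=1+a$ with $a$ the number of non-central classes in $A$. Since $|G|-k(G)=\sum_{C\text{ non-central}}(|C|-1)$ with every term $\ge p-1$, if at least one non-central class lies in $B$ then there are $\ge a+1$ non-central classes and $|G|-k(G)\ge(p-1)(a+1)=(p-1)|A|$, as wanted.

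The case in which \emph{every} non-central class lies in $A$ is the step I expect to be the main obstacle; it is handled as follows. If $z\in Z(G)$ and $g$ is non-central, then $z\in C_{G}(g)=C_{G}^{^\wedge}(g)$, hence $g\in C_{G}^{^\wedge}(z)$ by the symmetry of $\wedge$; thus $C_{G}^{^\wedge}(z)$ contains all non-central elements, and as these generate the non-abelian group $G$ we get $C_{G}^{^\wedge}(z)=G$, so $z\in Z^{^\wedge}(G)=1$ and therefore $Z(G)=1$. Now $p$ divides $|G|=1+\sum_{C}|C|$ (sum over the $t$ non-central classes), and writing $\sum_{C}|C|=pt+\sum_{C}(|C|-p)$ forces $\sum_{C}(|C|-p)\equiv p-1\pmod p$, hence $\sum_{C}(|C|-p)\ge p-1$; consequently $|G|-k(G)=\sum_{C}(|C|-1)=\sum_{C}(|C|-p)+(p-1)t\ge(p-1)(t+1)=(p-1)|A|$, finishing the proof. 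The only non-routine ingredients are the basic structural properties of exterior centralizers and the reduction to $Z(G)=1$ in the last paragraph; everything else is counting.
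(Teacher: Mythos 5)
Your argument is correct, but it does not follow the paper's route: Theorem \ref{pr2} is only quoted here from \cite{pr} and then re-obtained as the special case $N=G$ of Theorem \ref{mt}. The paper's proof of that generalization runs through the commutativity degree: it first bounds $d(G,N)$ in terms of $|Z(G,N)|/|N|$ (Lemma \ref{lg}), then uses capability together with the symmetry of $\wedge$ to produce, by contradiction, at least one non-central class on which ${}_GC^{^\wedge}(x)$ is a proper subgroup of $C_G(x)$, which gives $d^{^\wedge}(G,N)\le d(G,N)-(1-1/p)\,|Z(G,N)|/|N|$; combining the two estimates yields $1/p$. You instead partition the conjugacy classes according to whether $C_G^{^\wedge}(x_i)=C_G(x_i)$, reduce the theorem to the purely arithmetic inequality $(p-1)|A|\le |G|-k(G)$, and in the extremal case (every non-central class lying in $A$) use the symmetry of $\wedge$ to force $Z(G)=1$ and then invoke the class equation modulo $p$. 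The shared kernel of the two proofs is the observation that if every non-central element had $C_G^{^\wedge}=C_G$ then central elements would be exterior-central, contradicting $Z^{^\wedge}(G)=1$ for a capable group; but your bookkeeping is genuinely different and arguably cleaner: it is self-contained, never needs the auxiliary bound on $d(G)$ (so it also sidesteps the sign slips in the statement of Lemma \ref{lg}(ii) and in the display inside the proof of Theorem \ref{mt}), and your two cases are exhaustive by construction rather than by a separate reduction to $[G,G]\cap Z(G)\neq 1$. The trade-off is that your final step --- the congruence $\sum_C(|C|-p)\equiv p-1\pmod p$ obtained from $|G|=1+\sum_C|C|$ once $Z(G)=1$ --- is tied to the case $N=G$, whereas the paper's argument is organized so as to cover arbitrary pairs $(G,N)$.
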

\section{Exterior degree of a pair of finite groups}
In this section, we will define the exterior degree of a pair finite group $(G,N)$.  In the special case when $N=G$, the concept of exterior degree of $G$ is obtained.

\begin{defn}Let $(G,N)$ be a  pair of finite groups, set \[C:=|\{(g,n)\in G\times N~ |~ 1=g\wedge n\in G\wedge N\}|.\] We define the  exterior degree of the pair of groups $(G,N)$ as the ratio  \[d^{^\wedge}(G,N)=\ds\frac{|C|}{|G||N|}.\] Obviously, if $N=G$, then $d^{^\wedge}(G,G)=d^{^\wedge}(G)$ and $d^{^\wedge}(G,N)=1$ if and only if $N\subseteq Z^{\wedge}(G,N)$.
\end{defn}
Let a finite group $H$ acting on a finite group $K$ by conjugation. Then we denote the number of conjugacy $H$-classes of $K$ by $k_H(K)$.

The following lemmas are useful in the future investigation.

\begin{lem}\label{l1}
Let $(G,N)$ be a pair of finite groups, then \[d^{^\wedge}(G,N)=\ds\frac{1}{|G||N|}\sum_{x\in G}| {}_NC^{^\wedge}(x)|=\ds\frac{1}{|G||N|}\sum_{x\in N}|{}_GC^{^\wedge}(x)|.\]
\end{lem}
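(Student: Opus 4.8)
The plan is to express the counting set $C$ in two ways by slicing it along the two coordinate projections, exactly as one does for the classical commutativity degree. First I would write
\[
C = \{(g,n)\in G\times N \mid 1 = g\wedge n \in G\wedge N\},
\]
and observe that for a fixed $x\in G$, the ``column'' $\{n\in N \mid (x,n)\in C\}$ is precisely the exterior centralizer ${}_NC^{^\wedge}(x)$, by the very definition of ${}_NC^{^\wedge}(x)$ recalled in the introduction. Hence $|C| = \sum_{x\in G}|{}_NC^{^\wedge}(x)|$, and dividing by $|G||N|$ gives the first formula for $d^{^\wedge}(G,N)$. Symmetrically, fixing $x\in N$ and looking at the ``row'' $\{g\in G \mid (g,x)\in C\}$, this set is ${}_GC^{^\wedge}(x)$ by definition, so $|C| = \sum_{x\in N}|{}_GC^{^\wedge}(x)|$, yielding the second formula. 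The two expressions agree because both equal $|C|$.

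The key steps, in order, are: (1) recall the definitions of ${}_NC^{^\wedge}(x)$ for $x\in G$ and ${}_GC^{^\wedge}(x)$ for $x\in N$ from the introduction; (2) partition $C$ by its first coordinate, identifying each fiber with an exterior centralizer in $N$; (3) partition $C$ by its second coordinate, identifying each fiber with an exterior centralizer in $G$; (4) substitute into $d^{^\wedge}(G,N) = |C|/(|G||N|)$. The only mild subtlety — and the closest thing to an obstacle — is making sure the relation $g\wedge n = 1$ is genuinely symmetric in the sense needed, i.e.\ that $g\wedge n$ really denotes an element of the single group $G\wedge N$ regardless of which variable we think of as ``acting''; this is immediate from the construction of $G\wedge N$ via $G\otimes N$, so no real work is required. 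Everything else is a routine re-bracketing of a finite sum, so I would keep the write-up to a few lines.
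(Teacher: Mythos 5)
Your proposal is correct and is exactly the ``straightforward'' double-counting argument the paper intends (its own proof is a one-line remark to that effect): partition the set $C$ by first or second coordinate and identify each fiber with the corresponding exterior centralizer. Nothing is missing, and your note that $g\wedge n$ lives in the single group $G\wedge N$ regardless of which coordinate is fixed is the right (trivial) point to check.
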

\begin{proof}
The proof is straightforward.
\end{proof}

\begin{lem}\label{l2} Let $(G,N)$ be a pair of finite groups. Then
\begin{itemize}\item[(i)] If $k_N(G)=k$ and $\{x_1,...,x_k\}$ is a system of representatives for conjugacy $N$-classes of a group $G$, then
\[d^{^\wedge}(G,N)=\f{1}{|G|}\sum_{i=1}^{k}\f{|{}_NC^{^\wedge}(x_i)|}{|C_N(x_i)|}.\]
\item[(ii)] If $k_G(N)=t$ and $\{x_1,...,x_t\}$ is a system of representatives for conjugacy $G$-classes of a group $N$, then
\[d^{^\wedge}(G,N)=\f{1}{|N|}\sum_{i=1}^{t}\f{|{}_GC^{^\wedge}(x_i)|}{|C_G(x_i)|}.\]
\end{itemize}
\end{lem}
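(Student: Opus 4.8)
The plan is to prove both parts of Lemma~\ref{l2} by reorganizing the double‑counting formula from Lemma~\ref{l1} along conjugacy classes, exactly as one does when passing from $|G|^2 d^\wedge(G) = \sum_{x\in G}|C_G^\wedge(x)|$ to a sum over class representatives. I will only write out part (i); part (ii) is obtained by interchanging the roles of $G$ and $N$ and applying the second equality in Lemma~\ref{l1}.

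First I would record the key invariance fact: if $x,x'\in G$ lie in the same conjugacy $N$‑class, say $x' = {}^{n_0}x$ for some $n_0\in N$, then $|{}_NC^\wedge(x')| = |{}_NC^\wedge(x)|$. This is because the action of $N$ on $G\wedge N$ satisfies ${}^{n_0}(x\wedge n) = {}^{n_0}x \wedge {}^{n_0}n$, so conjugation by $n_0$ gives a bijection from ${}_NC^\wedge(x)$ onto ${}_NC^\wedge({}^{n_0}x)$; the same computation shows $|C_N(x')| = |C_N(x)|$, so the ratio $|{}_NC^\wedge(x_i)|/|C_N(x_i)|$ is a genuine class invariant. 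Granting this, I would start from Lemma~\ref{l1},
\[
d^\wedge(G,N) = \frac{1}{|G||N|}\sum_{x\in G} |{}_NC^\wedge(x)|,
\]
and split the sum over $G$ into the $k = k_N(G)$ conjugacy $N$‑classes $C_1,\dots,C_k$ with representatives $x_1,\dots,x_k$. On each class the summand is constant equal to $|{}_NC^\wedge(x_i)|$, and the class $C_i$ containing $x_i$ has size $|C_i| = |N|/|C_N(x_i)|$ by the orbit–stabilizer theorem applied to the $N$‑action. Hence
\[
\sum_{x\in G}|{}_NC^\wedge(x)| = \sum_{i=1}^{k} |C_i|\,|{}_NC^\wedge(x_i)| = |N|\sum_{i=1}^{k}\frac{|{}_NC^\wedge(x_i)|}{|C_N(x_i)|},
\]
and dividing by $|G||N|$ cancels the factor $|N|$ and yields the claimed formula.

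There is really no serious obstacle here — the statement is essentially a bookkeeping identity — but the one point that needs care, and which I would make sure to justify rather than assert, is the well‑definedness of the ratio $|{}_NC^\wedge(x_i)|/|C_N(x_i)|$ independently of the chosen representative, i.e. the bijection argument via the $N$‑action on $G\wedge N$. Once that compatibility of the conjugation action with the wedge operation is in hand (it is immediate from the defining relations of $G\otimes N$ recalled in the introduction), everything else is the orbit–stabilizer count. For part (ii) I would simply note that the roles of $G$ and $N$ are symmetric in this argument: use the second expression $d^\wedge(G,N) = \frac{1}{|G||N|}\sum_{x\in N}|{}_GC^\wedge(x)|$ from Lemma~\ref{l1}, partition $N$ into its $t = k_G(N)$ conjugacy $G$‑classes, observe $|{}_GC^\wedge(x)|$ and $|C_G(x)|$ are constant on each such class with the class of $x_i$ having size $|G|/|C_G(x_i)|$, and conclude as before.
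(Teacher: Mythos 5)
Your proposal is correct and follows essentially the same route as the paper's own proof: apply Lemma \ref{l1}, group the sum over $G$ (resp.\ $N$) into conjugacy $N$-classes (resp.\ $G$-classes), use the constancy of $|{}_NC^{^\wedge}(\cdot)|$ on each class together with the orbit--stabilizer count $|C_i|=[N:C_N(x_i)]$, and cancel the factor $|N|$. The only difference is that you spell out the class-invariance of $|{}_NC^{^\wedge}(x)|$ via the compatibility of conjugation with the wedge, which the paper merely asserts; this is a welcome bit of extra care but not a different argument.
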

 \begin{proof}(i) Let $C_1,...,C_k$ be the conjugacy $N$-classes of $G$ and  $x_i\in C_i$ for $1\leq i\leq k$.
For every $y\in C_i$, there exists $n\in N$ such that $y=x_i^n$, hence
$|{}_NC^{^\wedge}(y)|=|{}_NC^{^\wedge}(x_i)|.$ Therefore
\[\begin{array}{lcl}
|G||N|d^{^\wedge}(G,N)&=&\ds\sum_{x\in G}|{}_NC^{^\wedge}(x)|=\sum_{i=1}^{k}\sum_{x\in C_i}|{}_NC^{^\wedge}(x)|=\ds\sum_{i=1}^{k}[N:C_N(x_i)]|{}_NC^{^\wedge}(x_i)|\vspace{.2cm}\\
&=&\ds|N|\sum_{i=1}^{k}\f{|{}_NC^{^\wedge}(x_i)|}{|C_{N}(x_i)|},
\end{array}\]
as asserted.

$(ii)$ The proof is similar to the pervious part.
\end{proof}
\begin{prop}
Let $(G_1,N_1)$ and $(G_2,N_2)$ be two pairs of finite groups such that $G=G_1\times G_2$ and $N=N_1\times N_2$ in where  $|G_1|$ and $|G_2|$ are coprime. Then \[d^{^\wedge}(G_1\times G_2,N_1\times N_2)=d^{^\wedge}(G_1,N_1)d^{^\wedge}(G_2,N_2).\]
\end{prop}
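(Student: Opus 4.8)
The plan is to reduce the statement to a bijective correspondence between the relevant "exterior-commuting" pairs and then exploit coprimality. First I would recall from Lemma~\ref{l1} that $d^{^\wedge}(G,N)=\frac{1}{|G||N|}\sum_{x\in G}|{}_NC^{^\wedge}(x)|$, so it suffices to understand the exterior centralizer ${}_NC^{^\wedge}(x)$ when $x=(x_1,x_2)\in G_1\times G_2$ and $N=N_1\times N_2$. The heart of the argument is the claim that $(G_1\times G_2)\wedge(N_1\times N_2)\cong (G_1\wedge N_1)\times(G_2\wedge N_2)$, under which $(g_1,g_2)\wedge(n_1,n_2)$ corresponds to $(g_1\wedge n_1,\,g_2\wedge n_2)$; this should follow from the known behaviour of the non-abelian tensor (and hence exterior) product with respect to direct products when the orders are coprime, since the cross terms such as $g_1\wedge n_2$ in $G\wedge N$ become trivial. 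Granting this, $1=(g_1,g_2)\wedge(n_1,n_2)$ in $G\wedge N$ if and only if $1=g_1\wedge n_1$ in $G_1\wedge N_1$ and $1=g_2\wedge n_2$ in $G_2\wedge N_2$, which immediately gives ${}_NC^{^\wedge}(x_1,x_2)={}_{N_1}C^{^\wedge}(x_1)\times{}_{N_2}C^{^\wedge}(x_2)$.

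With that factorization in hand the computation is routine:
\begin{align*}
d^{^\wedge}(G,N)&=\f{1}{|G_1||G_2||N_1||N_2|}\sum_{(x_1,x_2)\in G_1\times G_2}|{}_{N_1}C^{^\wedge}(x_1)|\,|{}_{N_2}C^{^\wedge}(x_2)|\\
&=\left(\f{1}{|G_1||N_1|}\sum_{x_1\in G_1}|{}_{N_1}C^{^\wedge}(x_1)|\right)\left(\f{1}{|G_2||N_2|}\sum_{x_2\in G_2}|{}_{N_2}C^{^\wedge}(x_2)|\right)\\
&=d^{^\wedge}(G_1,N_1)\,d^{^\wedge}(G_2,N_2),
\end{align*}
using Lemma~\ref{l1} once more for each factor.

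The main obstacle is justifying the decomposition $(G_1\times G_2)\wedge(N_1\times N_2)\cong (G_1\wedge N_1)\times(G_2\wedge N_2)$ in the coprime setting; this is where the hypothesis $\gcd(|G_1|,|G_2|)=1$ is genuinely used, because in general the tensor square of a direct product carries extra cross terms $G_1\otimes N_2$ and $G_2\otimes N_1$ and these vanish precisely when the orders are coprime (the relations defining the tensor product force elements like $g_1\otimes n_2$ to have order dividing both $|G_1|$ and $|N_2|$, which divides $|G_2|$). I would cite the analogous known splitting for the tensor/exterior product of groups under coprimality — the pair-groups analogue of results of Brown--Johnson--Robertson and Ellis — and note that the compatibility of the actions of $G=G_1\times G_2$ on $N=N_1\times N_2$ with the product decomposition is automatic. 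Once the exterior product splits as a direct product compatibly with the wedge maps, the rest of the proof is the short counting argument above.
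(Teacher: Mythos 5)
Your proof is correct and follows essentially the same route as the paper: both reduce to the factorization of the exterior centralizer of $(x_1,x_2)$ as a product of the two exterior centralizers and then split the double sum from Lemma~\ref{l1}. In fact you supply the one step the paper leaves entirely implicit, namely that the splitting $(G_1\times G_2)\wedge(N_1\times N_2)\cong (G_1\wedge N_1)\times(G_2\wedge N_2)$ holds because the coprimality hypothesis kills the cross terms $g_1\wedge n_2$, whose orders divide both $|G_1|$ and $|G_2|$.
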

\begin{proof}
\[\begin{array}{lcl}
d^{^\wedge}(G,N)&=&\ds\frac{1}{|G||N|}\sum_{(x,y)\in N}|{}_GC^{^\wedge}((x,y))|\vspace{.3cm}\\&=&\ds\frac{1}{|G_1||G_2||H_1||H_2|}
\sum_{x\in N_1}\sum_{y\in N_2}|{}_{G_1}C^{^\wedge}(x)||{}_{G_2}C^{^\wedge}(y)|\vspace{.3cm}\\
&=&\ds\frac{1}{|G_1||H_1|}\sum_{x\in N_1}|{}_{G_1}C^{^\wedge}(x)|\ds\frac{1}{|G_2||H_2|}\sum_{y\in N_2}|{}_{G_2}C^{^\wedge}(y)|
\vspace{.3cm}\\
&=&d^{^\wedge}(G_1,N_1)d^{^\wedge}(G_2,N_2).
\end{array}\]
\end{proof}
\begin{lem} For all $x$ in $G$, the factor group $C_N(x)/{}_NC^{^\wedge}(x)$ is isomorphic to a subgroup of $\mathcal {M}(G,N).$
\end{lem}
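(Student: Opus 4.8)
The plan is to identify the map that realizes $C_N(x)/{}_NC^{^\wedge}(x)$ as a subgroup of $\mathcal{M}(G,N) = \ker\kappa$, where $\kappa\colon G\wedge N \to [G,N]$ is the commutator map introduced in the introduction. For a fixed $x\in G$, I would consider the assignment $n\mapsto x\wedge n$ for $n\in C_N(x)$; the point of requiring $n\in C_N(x)$ is exactly that then $\kappa(x\wedge n)=[x,n]=1$, so $x\wedge n$ lands in $\mathcal{M}(G,N)$. First I would check this is a group homomorphism $C_N(x)\to \mathcal{M}(G,N)$: using the defining relation $x\wedge nn' = (x\wedge n)(^n x \wedge {}^n n')$ and the fact that $n\in C_N(x)$ forces $^n x = x$ (conjugation of $x$ by an element commuting with it), the right-hand side collapses to $(x\wedge n)(x\wedge {}^n n')$, and one still needs $^n n'$ to be handled — but since both $n,n'\in C_N(x)$ we are only evaluating on such elements, and I would verify that $C_N(x)$ is closed under the relevant conjugations so the formula simplifies to additivity $x\wedge nn' = (x\wedge n)(x\wedge n')$ in the abelian group $\mathcal{M}(G,N)$ (note $\mathcal{M}(G,N)$ is central in $G\wedge N$, which also helps).

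Next I would compute the kernel of this homomorphism: $n\mapsto x\wedge n$ is trivial precisely when $x\wedge n = 1$ in $G\wedge N$, which by definition of the exterior centralizer means $n\in {}_NC^{^\wedge}(x)$. Hence the kernel is exactly ${}_NC^{^\wedge}(x)$, and the first isomorphism theorem gives $C_N(x)/{}_NC^{^\wedge}(x) \hookrightarrow \mathcal{M}(G,N)$, as claimed. So the structure of the argument is short: (i) restrict the domain of $n\mapsto x\wedge n$ to $C_N(x)$ so the image lies in $\ker\kappa$; (ii) verify homomorphism property using the tensor/exterior relations together with $^n x = x$; (iii) identify the kernel with ${}_NC^{^\wedge}(x)$; (iv) apply the first isomorphism theorem.

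The main obstacle I anticipate is step (ii): making the homomorphism property genuinely rigorous. The relation $x\wedge nn' = (x\wedge n)({}^n x\wedge {}^n n')$ reduces, using $n\in C_N(x)$, to $(x\wedge n)(x\wedge {}^n n')$, and one must then argue that $x\wedge {}^n n' = x\wedge n'$. This is where centrality of $\mathcal{M}(G,N)$ in $G\wedge N$ and the fact that $x\wedge n$ itself lies in $\mathcal{M}(G,N)$ (since $n\in C_N(x)$) should be invoked: conjugation in $G\wedge N$ by $x\wedge n$ acts trivially on the central element $x\wedge n'$, and there is a standard identity expressing $({}^{[x,n]}$-twisted$)$ action on $x\wedge n'$ via conjugation by $x\wedge n$. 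I would want to cite or reproduce the relevant action formula from \cite{loday} or \cite{Ellis} to close this gap cleanly rather than manipulate generators by hand. Everything else is bookkeeping with the first isomorphism theorem.
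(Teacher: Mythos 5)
Your proposal is correct and follows essentially the same route as the paper: the paper defines the same map $y\mapsto y\wedge x$ on $C_N(x)$, justifies the homomorphism property by the single observation that elements of $\mathcal{M}(G,N)$ are fixed under the action of $G$ (exactly the fact you identify as the crux in step (ii), since ${}^n(x\wedge n')=x\wedge {}^n n'$ for $n\in C_N(x)$), and identifies the kernel as ${}_NC^{^\wedge}(x)$. Your write-up is just a more detailed unpacking of the paper's two-line argument.
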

\begin{proof}
For all $x\in G$, we can define  $f:C_N(x)\rightarrow \mathcal {M}(G,N)$ by $y\mapsto y\wedge x$. Since the elements of $\mathcal{M}(G,N)$ are fixed under the action of $G$, $f$ is actually a homomorphism with ${}_NC^{^\wedge}(x)$ as its kernel.  \end{proof}
 The following theorem give a generalization of Theorem \ref{pr1}.
\begin{thm}\label{a}Let $(G,N)$ be a pair of finite groups and $p$ be the smallest prime number dividing the order of $G$. Then
\begin{itemize}
\item[(i)] $\ds\f{d(G,N)}{|\mathcal{M}(G,N)|}+\f{|Z^{^\wedge}(G,N)|}{|G|}(1-\f{1}{|\mathcal{M}(G,N)|})\leq d^{^\wedge}(G,N);$
\item[(ii)] $\ds d^{^\wedge}(G,N)\leq d(G,N)-(\f{p-1}p)(\f{|Z(G,N)|-|Z^{^\wedge}(G,N)|}{|N|}).$
\end{itemize}
\end{thm}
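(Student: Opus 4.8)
The plan is to mimic the proof of Theorem~\ref{pr1} but work relative to $N$ throughout, using the counting formula of Lemma~\ref{l1} together with the homomorphism $f\colon C_N(x)\to\mathcal{M}(G,N)$, $y\mapsto y\wedge x$, constructed in the lemma immediately preceding the statement. First I would fix $x\in G$ and analyze the three nested subgroups
\[
{}_NC^{^\wedge}(x)\ \le\ C_N(x)\ \le\ N .
\]
For part (i), I would estimate $|{}_NC^{^\wedge}(x)|$ from below. If $x\notin C_G(N)$, then $C_N(x)\ne N$; crucially, by the preceding lemma $C_N(x)/{}_NC^{^\wedge}(x)$ embeds in $\mathcal{M}(G,N)$, so $|{}_NC^{^\wedge}(x)|\ge |C_N(x)|/|\mathcal{M}(G,N)|$. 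If instead $x$ centralizes $N$ (equivalently $C_N(x)=N$), then ${}_NC^{^\wedge}(x)\supseteq Z^{^\wedge}(G,N)$, giving $|{}_NC^{^\wedge}(x)|\ge |Z^{^\wedge}(G,N)|$; and more precisely, writing things carefully, one sees that for such $x$ the contribution is exactly controlled by $Z^{^\wedge}(G,N)$ up to the $\mathcal{M}(G,N)$-factor. Summing $|{}_NC^{^\wedge}(x)|$ over all $x\in G$ via Lemma~\ref{l1} and splitting the sum according to whether $x\in C_G(N)$ or not, then comparing with the analogous decomposition of $|G||N|d(G,N)=\sum_{x\in G}|C_N(x)|$, should reproduce the bound
\[
\frac{d(G,N)}{|\mathcal{M}(G,N)|}+\frac{|Z^{^\wedge}(G,N)|}{|G|}\Bigl(1-\frac{1}{|\mathcal{M}(G,N)|}\Bigr)\ \le\ d^{^\wedge}(G,N).
\]

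For part (ii) I would switch to the dual counting formula $d^{^\wedge}(G,N)=\frac{1}{|G||N|}\sum_{x\in N}|{}_GC^{^\wedge}(x)|$ and compare it termwise with $d(G,N)=\frac{1}{|G||N|}\sum_{x\in N}|C_G(x)|$. For $x\in Z^{^\wedge}(G,N)$ we have ${}_GC^{^\wedge}(x)=G=C_G(x)$, so these terms agree. For $x\in Z(G,N)\setminus Z^{^\wedge}(G,N)$ we still have $C_G(x)=G$, but now ${}_GC^{^\wedge}(x)$ is a \emph{proper} subgroup of $G$ (else $x$ would lie in $Z^{^\wedge}(G,N)$), hence $[G:{}_GC^{^\wedge}(x)]\ge p$, i.e. $|{}_GC^{^\wedge}(x)|\le |G|/p$, so each such $x$ contributes a deficit of at least $|G|(1-1/p)=|G|\frac{p-1}{p}$ relative to the $d(G,N)$ count. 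For the remaining $x\in N$ we use only the trivial bound $|{}_GC^{^\wedge}(x)|\le |C_G(x)|$. Collecting the deficits over the $|Z(G,N)|-|Z^{^\wedge}(G,N)|$ elements of $Z(G,N)\setminus Z^{^\wedge}(G,N)$ and dividing by $|G||N|$ yields exactly
\[
d^{^\wedge}(G,N)\ \le\ d(G,N)-\frac{p-1}{p}\cdot\frac{|Z(G,N)|-|Z^{^\wedge}(G,N)|}{|N|}.
\]

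The routine parts are the two counting identities and the index estimate $[G:{}_GC^{^\wedge}(x)]\ge p$, which follows since ${}_GC^{^\wedge}(x)$ is a subgroup of $G$ and $p$ is the smallest prime dividing $|G|$. The main obstacle I anticipate is in part (i): matching the $Z^{^\wedge}(G,N)$-term correctly. One must be careful that the lemma only gives $C_N(x)/{}_NC^{^\wedge}(x)\hookrightarrow\mathcal{M}(G,N)$ as an upper bound on the index, so the lower bound $|{}_NC^{^\wedge}(x)|\ge |C_N(x)|/|\mathcal{M}(G,N)|$ is applied uniformly, and then one separately notes that the elements $x$ with $C_N(x)=N$ actually satisfy ${}_NC^{^\wedge}(x)\supseteq Z^{^\wedge}(G,N)$, so for those one may replace the weaker estimate $|N|/|\mathcal{M}(G,N)|$ by the genuinely larger quantity — handling this replacement additively (adding $|Z^{^\wedge}(G,N)|(1-1/|\mathcal{M}(G,N)|)$ per such $x$, then observing there are $|C_G(N)|\ge 1$ of them, or more simply bounding below by a single copy) is where the precise bookkeeping lives. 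In the degenerate case $N\subseteq Z^{^\wedge}(G,N)$ both sides collapse to $1$, which should be checked as a sanity case; and when $\mathcal{M}(G,N)=1$ the lower bound reduces to $d(G,N)\le d^{^\wedge}(G,N)$, consistent with $\kappa$ being an isomorphism $G\wedge N\cong[G,N]$.
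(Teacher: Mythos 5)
Your part (ii) is correct and is essentially the paper's own argument (the paper phrases it over conjugacy $G$-class representatives via Lemma \ref{l2}(ii), but since the elements of $Z(G,N)$ form singleton classes this is the same computation, including the index estimate $[G:{}_GC^{^\wedge}(x)]\geq p$). Part (i), however, has a genuine gap: the split by whether $x\in C_G(N)$ is the wrong decomposition, and the bookkeeping you propose for it does not produce the stated bound. Write $m=|\mathcal{M}(G,N)|$ and $z=|Z^{^\wedge}(G,N)|$. After clearing denominators, what must be shown is
\[
\sum_{x\in G}|{}_NC^{^\wedge}(x)|\;\geq\;\frac{1}{m}\sum_{x\in G}|C_N(x)|+|N|\,z\Bigl(1-\frac{1}{m}\Bigr),
\]
so the total excess over the uniform $1/m$ estimate must be $|N|z(1-1/m)$. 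Your plan extracts an excess of $z(1-1/m)$ from each $x\in C_G(N)$ (or even from a single such $x$), which falls short by a factor of up to $|N|$; and the pointwise claim behind it, namely $|{}_NC^{^\wedge}(x)|\geq |N|/m+z(1-1/m)$ for $x\in C_G(N)$, is false in general: ${}_NC^{^\wedge}(x)$ contains $Z^{^\wedge}(G,N)$ and has index at most $m$ in $N$, but $\max(|N|/m,\,z)$ does not dominate $|N|/m+z(1-1/m)$ unless the index is $1$ or $z=|N|$. Concretely, for $G=N=D_8$ and $x=a^2\in C_G(N)\setminus Z^{^\wedge}(G)$ one has $m=2$, $z=1$, and $|C_{D_8}^{^\wedge}(a^2)|=4=|C_{D_8}(a^2)|/m$ exactly, so this element contributes no excess at all.

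The repair is precisely the split you already use in part (ii): partition $G$ into $Z^{^\wedge}(G,N)$ and its complement. For each of the $z$ elements $x\in Z^{^\wedge}(G,N)$ one has ${}_NC^{^\wedge}(x)=N=C_N(x)$, so each contributes the full excess $|N|(1-1/m)$, giving the required total; for every other $x$ the embedding $C_N(x)/{}_NC^{^\wedge}(x)\hookrightarrow\mathcal{M}(G,N)$ gives $|{}_NC^{^\wedge}(x)|\geq|C_N(x)|/m$. This is exactly the paper's proof, stated there over conjugacy $N$-class representatives via Lemma \ref{l2}(i): the elements of $Z^{^\wedge}(G,N)$ form singleton classes contributing ratio $1$, the remaining $k-z$ classes contribute ratio at least $1/m$, and $d(G,N)=k/|G|$. (Incidentally, running the same termwise comparison on the dual sum $\sum_{x\in N}|{}_GC^{^\wedge}(x)|$ with the split by $Z^{^\wedge}(G,N)$ would yield the stronger constant $\frac{z}{|N|}(1-\frac{1}{m})$ in place of $\frac{z}{|G|}(1-\frac{1}{m})$, but that is not the route you describe.)
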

\begin{proof} $(i)$ By using Lemma \ref{l2} $(i)$,
\[\begin{array}{lcl}
d^{^\wedge}(G,N)&=&\ds\f{1}{|G|}\sum_{i=1}^{k}\f{|{}_NC^{^\wedge}(x_i)|}{|C_N(x_i)|}\vspace{.3cm}\\
&\geq&\ds\f1{|G|}\left(|Z^{^\wedge}(G,N)|+\f1{|\mathcal{M}(G,N)|}(k-|Z^{^\wedge}(G,N)|)\right)\vspace{.3cm}\\
&=&\ds\f{k}{|G||\mathcal{M}(G,N)|}+\f{|Z^{^\wedge}(G,N)|}{|G|}\left(1-\f1{|\mathcal{M}(G,N)|}\right).
\end{array}\]The results is obtained by the fact that $d(G,N)=k/|G|$.

$(ii)$ If $x\notin$ $Z^{^\wedge}(G,N)$, then $[G:{}_GC^{^\wedge}(x)]\geq p$. Hence by Lemma \ref{l2} $(ii)$,
\[\begin{array}{lcl}
d^{^\wedge}(G,N)&=&\ds\f1{|N|}\sum_{i=1}^{t}\f{|{}_GC^{^\wedge}(x_i)|}{|C_G(x_i)|}\vspace{.3cm}\\
&\leq&\ds\f{|Z^{^\wedge}(G,N)|}{|N|}+\f1p\f{|Z(G,N)|-|Z^{^\wedge}(G,N)|}{|N|}+\f{t-|Z(G,N)|}{|N|}\vspace{.3cm}\\
&=&\ds d(G,N)-(\f{p-1}p)(\f{|Z(G,N)|-|Z^{^\wedge}(G,N)|}{|N|}).
\end{array}\]
\end{proof}

\begin{thm}
If $p$ is the smallest prime number dividing the order of $G$, then for every  pair of finite groups $(G,N)$, \[ \ds\f{|Z^{^\wedge}(G,N)|}{|N|}+\f{p(|N|-|Z^{^\wedge}(G,N)|)}{|G||N|}\leq d^{^\wedge}(G,N)
\leq \ds\f{|Z^{^\wedge}(G,N)|}{|N|}+\f{|N|-|Z^{^\wedge}(G,N)|}{p~|N|}.\]
\end{thm}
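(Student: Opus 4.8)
The plan is to read off both inequalities from Lemma~\ref{l1}, which gives
\[|G||N|\,d^{^\wedge}(G,N)=\sum_{x\in N}|{}_GC^{^\wedge}(x)|,\]
by splitting the sum over $x\in N$ according to whether or not $x$ lies in $Z^{^\wedge}(G,N)$. So the whole proof comes down to a two--sided estimate on the single summand $|{}_GC^{^\wedge}(x)|$ for $x\notin Z^{^\wedge}(G,N)$, together with the obvious value $|{}_GC^{^\wedge}(x)|=|G|$ when $x\in Z^{^\wedge}(G,N)$.

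First I would record what is needed about the sets ${}_GC^{^\wedge}(x)$ for $x\in N$. By the reasoning of the lemma preceding this theorem --- for fixed $x\in N$ the assignment $g\mapsto g\wedge x$ is a homomorphism from $C_G(x)$ to $\mathcal{M}(G,N)$, because $[g,x]=1$ for $g\in C_G(x)$ and the elements of $\mathcal{M}(G,N)$ are fixed under the $G$-action, and its kernel is exactly ${}_GC^{^\wedge}(x)$ --- the set ${}_GC^{^\wedge}(x)$ is a subgroup of $C_G(x)$, hence of $G$. If $x\in Z^{^\wedge}(G,N)$ then ${}_GC^{^\wedge}(x)=G$, so $|{}_GC^{^\wedge}(x)|=|G|$; this includes $x=1$. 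If $x\in N\setminus Z^{^\wedge}(G,N)$, then ${}_GC^{^\wedge}(x)$ is a \emph{proper} subgroup of $G$ (by the definition of $Z^{^\wedge}(G,N)$, some $g\in G$ satisfies $g\wedge x\neq1$) and it is also \emph{nontrivial}, because the exterior relation $x\wedge x=1$ in $G\wedge N$, valid for $x\in N\subseteq G$, shows $x\in{}_GC^{^\wedge}(x)$ while $x\neq1$. Since the order of a nontrivial proper subgroup of $G$ is a divisor of $|G|$ other than $1$ and $|G|$, it lies between $p$ and $|G|/p$; hence
\[p\ \le\ |{}_GC^{^\wedge}(x)|\ \le\ \f{|G|}{p}\qquad\text{for every }x\in N\setminus Z^{^\wedge}(G,N).\]

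It then only remains to split the sum. Writing $z=|Z^{^\wedge}(G,N)|$,
\[|G||N|\,d^{^\wedge}(G,N)=\sum_{x\in Z^{^\wedge}(G,N)}|{}_GC^{^\wedge}(x)|+\sum_{x\in N\setminus Z^{^\wedge}(G,N)}|{}_GC^{^\wedge}(x)|=z\,|G|+\sum_{x\in N\setminus Z^{^\wedge}(G,N)}|{}_GC^{^\wedge}(x)|,\]
and feeding the displayed bound into each of the $|N|-z$ remaining summands gives
\[z\,|G|+p(|N|-z)\ \le\ |G||N|\,d^{^\wedge}(G,N)\ \le\ z\,|G|+\f{|G|}{p}\,(|N|-z).\]
Dividing through by $|G||N|$ yields precisely the two displayed estimates. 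The argument is routine; the only points deserving care are that ${}_GC^{^\wedge}(x)$ really is a subgroup (so that Lagrange's theorem and the argument through the smallest prime divisor of $|G|$ apply) and that it is nontrivial when $x\notin Z^{^\wedge}(G,N)$ --- and both of these follow from the lemma preceding this theorem together with the identity $x\wedge x=1$ for $x\in N$ --- so no ingredient beyond Lemma~\ref{l1} is really needed.
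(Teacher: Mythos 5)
Your proof is correct and follows essentially the same route as the paper: apply Lemma~\ref{l1}, split the sum over $x\in N$ according to membership in $Z^{^\wedge}(G,N)$, and bound the remaining summands by $p\le|{}_GC^{^\wedge}(x)|\le|G|/p$. The only difference is that you carefully justify this last inequality (via ${}_GC^{^\wedge}(x)$ being a nontrivial proper subgroup of $G$, using $x\wedge x=1$), whereas the paper simply asserts it.
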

\begin{proof} By using Lemma \ref{l1},
\[\begin{array}{lcl}d^{^\wedge}(G,N)&=&\ds\frac{1}{|G||N|}\sum_{x\in N}|{}_GC^{^\wedge}(x)|\vspace{.3cm}\\&=&\ds\f{1}{|G||N|}\left(\sum_{x\in Z^{^\wedge}(G,N)}|{}_GC^{^\wedge}(x)|
+\sum_{x\notin Z^{^\wedge}(G,N)}|{}_GC^{^\wedge}(x)|\right).\end{array}\]
  Since $p\leq |{}_GC^{^\wedge}(x)|\leq |G|/p$ for all $x\notin Z^{^\wedge}(G,N)$, so
\[\begin{array}{lcl}\ds\f{1}{|G||N|}\big(\sum_{x\in Z^{^\wedge}(G,N)}|{}_GC^{^\wedge}(x)|
&+&\sum_{x\notin Z^{^\wedge}(G,N)}|{}_GC^{^\wedge}(x)|\big)\vspace{.3cm}\\&\leq&
\ds\f{|Z^{^\wedge}(G,N)|}{|N|}+\f{|N|-|Z^{^\wedge}(G,N)|}{p~|N|}.\end{array}\]
On the other hand,  \[\begin{array}{lcl}\ds\f{|Z^{^\wedge}(G,N)|}{|N|}&+&\ds\f{p(|N|-|Z^{^\wedge}(G,N)|)}{|G||N|}\vspace{.3cm}\\&\leq& \ds\f{1}{|G||N|}
\left(\sum_{x\in Z^{^\wedge}(G,N)}|{}_GC^{^\wedge}(x)|+\sum_{x\notin Z^{^\wedge}(G,N)}|{}_GC^{^\wedge}(x)|\right),\end{array}\]
hence the result follows.
\end{proof}
\begin{cor}
Let $(G,N)$ be a pair of finite groups and $p$ be the smallest prime divisor of the order of $G$. If   $N\neq Z^{^\wedge}(G,N)$, then
$d^{^\wedge}(G,N)\leq (2p-1)/p^2,$ In particular, $d^{^\wedge}(G,N)\leq\f 34.$
\end{cor}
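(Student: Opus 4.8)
The plan is to derive the corollary directly from the preceding theorem by optimizing over the quantity $|Z^{^\wedge}(G,N)|$. Write $z=|Z^{^\wedge}(G,N)|$ and $m=|N|$, so that the theorem's upper bound reads $d^{^\wedge}(G,N)\leq \frac{z}{m}+\frac{m-z}{pm}$. Since $N\neq Z^{^\wedge}(G,N)$ we have $z\leq m/p$ (because $Z^{^\wedge}(G,N)$ is a proper subgroup of $N$ and $p$ is the smallest prime dividing $|G|$, hence also the smallest prime dividing $|N|$, as $N\trianglelefteq G$). The right-hand side $\frac{z}{m}+\frac{m-z}{pm}=\frac{1}{p}+\frac{(p-1)z}{pm}$ is increasing in $z$, so substituting the maximal admissible value $z=m/p$ gives $d^{^\wedge}(G,N)\leq \frac1p+\frac{p-1}{p^2}=\frac{p+p-1}{p^2}=\frac{2p-1}{p^2}$, which is the first claim.

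For the ``in particular'' part, I would observe that the function $p\mapsto \frac{2p-1}{p^2}$ is decreasing for $p\geq 1$ (its derivative is $\frac{2p^2-2p(2p-1)}{p^4}=\frac{-2p^2+2p}{p^4}=\frac{2(1-p)}{p^3}\leq 0$), so over primes $p\geq 2$ it is maximized at $p=2$, giving $\frac{2\cdot 2-1}{4}=\frac34$. Hence $d^{^\wedge}(G,N)\leq \frac34$ unconditionally under the hypothesis $N\neq Z^{^\wedge}(G,N)$.

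The one point requiring a small justification is the inequality $z\leq m/p$: I would note that $Z^{^\wedge}(G,N)$ is a subgroup of $N$ (it is central in $N$, as recalled in the introduction), and that by Lagrange's theorem a proper subgroup of $N$ has index at least the smallest prime dividing $|N|$; since $N$ is a subgroup of $G$, every prime dividing $|N|$ divides $|G|$, so $p$ is $\leq$ that smallest prime, whence $[N:Z^{^\wedge}(G,N)]\geq p$, i.e. $z\leq m/p$. I do not anticipate any real obstacle here; the only thing to be careful about is monotonicity — both in $z$ (to justify plugging in the extreme value) and in $p$ (to justify the universal bound $3/4$) — and both are elementary one-line checks.
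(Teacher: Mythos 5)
Your proof is correct, and it reaches the bound by a genuinely different route than the paper. You work entirely from the upper estimate of the immediately preceding theorem, $d^{^\wedge}(G,N)\leq \frac{|Z^{^\wedge}(G,N)|}{|N|}+\frac{|N|-|Z^{^\wedge}(G,N)|}{p\,|N|}$, note that the right-hand side is increasing in $z=|Z^{^\wedge}(G,N)|$, and cap $z$ at $|N|/p$ by the index argument: a proper subgroup of $N$ has index at least the smallest prime dividing $|N|$, which is at least $p$ because $|N|$ divides $|G|$. The paper instead argues by cases: when $N\nsubseteq Z(G)$ it quotes a theorem of Das and Nath on the relative commutativity degree, and when $N\subseteq Z(G)$ it feeds $d(G,N)=1$ into part (ii) of the theorem comparing $d^{^\wedge}(G,N)$ with $d(G,N)$ and then applies the same index bound on $Z^{^\wedge}(G,N)$. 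Your version is self-contained --- no case split and no external citation --- and the only extra ingredient beyond the preceding theorem, namely $|Z^{^\wedge}(G,N)|\leq |N|/p$, is exactly the point you flag and justify correctly in your closing paragraph. (One small slip: in your first paragraph you say $p$ is ``the smallest prime dividing $|N|$,'' which need not be true, since $p$ may fail to divide $|N|$ at all; but your final paragraph states the correct version --- every prime dividing $|N|$ is at least $p$, so the index of a proper subgroup of $N$ is at least $p$ --- and that is all the argument needs.) The two monotonicity checks, in $z$ and in $p$, are both right, so the specialization to $p=2$ giving $3/4$ is sound.
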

\begin{proof}
In the case $N\nsubseteq Z(G)$, the  result is obtained by \cite[Theorem 3.9]{da}.
Now suppose that $N\subseteq Z(G)$.  Then $d(G,N)=1$, and so by Theorem \ref{a} $(ii)$,
\[d^{^\wedge}(G,N)\leq\ds 1-(\f{p-1}p)(1-\f{|Z^{^\wedge}(G,N)|}{|N|})\leq 1-(p-1)^2/p^2=(2p-1)/p^2.\]
\end{proof}
The example \ref{ex3} will show that the above upper bound is sharp and  the bound we obtained there is the best possible one.

\begin{lem}\label{gh} Let $(G,N)$ and $(G,H)$ be pairs of finite groups. Then the following sequence is exact.
\[N\wedge H\times G\wedge N\stackrel{l}\rightarrow G\wedge H\rightarrow G/N\wedge H/N\rightarrow 0. \]
\end{lem}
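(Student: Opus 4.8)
The plan is to let $\phi\colon G\wedge H\to G/N\wedge H/N$ be the (second) map of the sequence, defined on generators by $g\wedge h\mapsto gN\wedge hN$, and to prove that $\ker\phi=\operatorname{im}l$, where $\operatorname{im}l$ denotes the subgroup of $G\wedge H$ generated by the image of $N\wedge H$ (under $n\wedge h\mapsto n\wedge h$) and the image of $G\wedge N$ (under $g\wedge n\mapsto g\wedge n$); here we use that $N\subseteq H$, which is implicit in the statement. That $\phi$ is a well-defined homomorphism is checked on the defining relations of $G\wedge H$: the relation $x\wedge x$ with $x\in H$ is sent to $xN\wedge xN=1$, and each tensor relation is sent to the corresponding relation in $G/N\wedge H/N$, since ${}^{gN}(g'N)=({}^{g}g')N$ and ${}^{gN}(hN)=({}^{g}h)N$. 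Clearly $\phi$ is onto, as it hits every generator $gN\wedge hN$; this is exactness at the right-hand term. Moreover $\phi(n\wedge h)=N\wedge hN=1$ and $\phi(g\wedge n)=gN\wedge N=1$ for $n\in N$, so $\operatorname{im}l\subseteq\ker\phi$.

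The essential point is that $\operatorname{im}l$ is \emph{normal} in $G\wedge H$; this is what makes $\operatorname{im}l$ eligible to be a kernel, and it is where the real work lies. I will use the standard structure of the non-abelian exterior product \cite{loday,brow}: $G\wedge H$ carries an action of $G$ induced by conjugation, satisfying ${}^{x}(a\wedge b)={}^{x}a\wedge{}^{x}b$, and it is standard (the Peiffer identity for the commutator map $G\wedge H\to[G,H]$, $g\wedge h\mapsto[g,h]$) that conjugation by a generator $a\wedge b$ in $G\wedge H$ coincides with the conjugation action of $[a,b]\in G$. Since $N$ and $H$ are normal in $G$, for $n\in N$ and $h\in H$ we get $(a\wedge b)(n\wedge h)(a\wedge b)^{-1}={}^{[a,b]}n\wedge{}^{[a,b]}h$ with ${}^{[a,b]}n\in N$ and ${}^{[a,b]}h\in H$; hence the image of $N\wedge H$ in $G\wedge H$ is normalized by every generator of $G\wedge H$, so it is normal. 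Likewise, for $g\in G$ and $n'\in N$ one has $(a\wedge b)(g\wedge n')(a\wedge b)^{-1}={}^{[a,b]}g\wedge{}^{[a,b]}n'$ with ${}^{[a,b]}n'\in N$, so the image of $G\wedge N$ is normal. A subgroup generated by two normal subgroups is normal, hence $K:=\operatorname{im}l\unlhd G\wedge H$.

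It remains to show $\ker\phi\subseteq K$. Let $q\colon G\wedge H\to Q:=(G\wedge H)/K$ be the quotient map; since $K\subseteq\ker\phi$, the homomorphism $\phi$ factors as $\bar\phi\circ q$ with $\bar\phi\colon Q\to G/N\wedge H/N$ surjective, and it suffices to produce a homomorphism $\psi$ with $\psi\bar\phi=\operatorname{id}_Q$. For $n_1,n_2\in N$, two applications of the tensor relations give $(gn_1)\wedge(hn_2)=\bigl({}^{g}n_1\wedge{}^{g}(hn_2)\bigr)(g\wedge hn_2)$ and $g\wedge hn_2=(g\wedge h)\bigl({}^{h}g\wedge{}^{h}n_2\bigr)$; here ${}^{g}n_1\in N$ and ${}^{g}(hn_2)\in H$, while ${}^{h}g\in G$ and ${}^{h}n_2\in N$, so both outer factors lie in $K$ and therefore $q\bigl((gn_1)\wedge(hn_2)\bigr)=q(g\wedge h)$. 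Thus $(gN,hN)\mapsto q(g\wedge h)$ is a well-defined function on $G/N\times H/N$; it satisfies the two tensor relations (because $q$ is a homomorphism and they hold in $G\wedge H$) and sends $(hN,hN)$ to $q(h\wedge h)=1$, so by the universal property of $G/N\wedge H/N$ it extends to a homomorphism $\psi\colon G/N\wedge H/N\to Q$ with $\psi(gN\wedge hN)=q(g\wedge h)$. On generators $\bar\phi\psi$ and $\psi\bar\phi$ are identity maps, so $\bar\phi$ is an isomorphism, $\ker\phi=K=\operatorname{im}l$, and the sequence is exact. The one genuinely substantial ingredient is the normality of $\operatorname{im}l$ proved in the second paragraph (via the conjugation-as-commutator-action identity for the exterior product); the rest is a direct manipulation of the defining relations.
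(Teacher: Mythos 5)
Your proof is correct and follows the same route as the argument the paper merely cites (it refers the reader to Proposition 9 of Brown--Loday without giving details): define the induced surjection $\phi\colon G\wedge H\to G/N\wedge H/N$, show the two images generate a normal subgroup $K\subseteq\ker\phi$ via the crossed-module (Peiffer) identity $(a\wedge b)(x\wedge y)(a\wedge b)^{-1}={}^{[a,b]}x\wedge{}^{[a,b]}y$, and then split $\bar\phi$ by constructing $\psi(gN\wedge hN)=q(g\wedge h)$, whose well-definedness is exactly your coset computation $(gn_1)\wedge(hn_2)\equiv g\wedge h \pmod K$. You also correctly handle the one delicate point of interpretation, namely reading $\operatorname{im}l$ as the (normal) subgroup generated by the images of $N\wedge H$ and $G\wedge N$, which is needed since the map from the direct product need not itself be a homomorphism.
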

\begin{proof} The proof is similar to \cite[Proposition 9]{loday}.
\end{proof}
\begin{thm}Let $(G,N)$ and $(G,H)$ be pairs of finite groups. Then \[d^{^\wedge}(G,H)
\leq d^{^\wedge}( G/N,H/N),\] and  the equality holds
when $N\subseteq Z^{^\wedge}(G,H),$
\end{thm}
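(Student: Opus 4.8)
The plan is to base everything on the exact sequence of Lemma~\ref{gh}. It provides a natural epimorphism $\pi\colon G\wedge H\to G/N\wedge H/N$ sending $g\wedge h$ to $gN\wedge hN$, whose kernel is the image of $l$, that is, the subgroup of $G\wedge H$ generated by all $n\wedge h$ with $n\in N$, $h\in H$ together with all $g\wedge n$ with $g\in G$, $n\in N$. (Throughout I use the implicit hypothesis $N\subseteq H$, without which $H/N$ is not defined.) Alongside $\pi$ I would use the surjection of sets $\phi\colon G\times H\to G/N\times H/N$, $(g,h)\mapsto(gN,hN)$: each of its fibres is a coset product $gN\times hN$ and hence has exactly $|N|^2$ elements.

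For the inequality, write $T=\{(g,h)\in G\times H\mid g\wedge h=1\}$ and $\bar T=\{(\bar g,\bar h)\in G/N\times H/N\mid \bar g\wedge \bar h=1\}$, so that $d^{^\wedge}(G,H)=|T|/(|G||H|)$ and $d^{^\wedge}(G/N,H/N)=|\bar T|/(|G/N||H/N|)$. If $(g,h)\in T$ then $gN\wedge hN=\pi(g\wedge h)=1$, so $\phi(T)\subseteq\bar T$, i.e. $T\subseteq\phi^{-1}(\bar T)$. The fibre count gives $|T|\le|\phi^{-1}(\bar T)|=|N|^2\,|\bar T|$, and dividing by $|G||H|$ while using $|G/N|=|G|/|N|$ and $|H/N|=|H|/|N|$ yields $d^{^\wedge}(G,H)\le d^{^\wedge}(G/N,H/N)$.

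For the equality under the extra hypothesis $N\subseteq Z^{^\wedge}(G,H)$, it suffices to show that $\pi$ is then an isomorphism, for then $T=\phi^{-1}(\bar T)$ and the displayed inequality becomes an equality. Since $N\subseteq Z^{^\wedge}(G,H)$, we have $g\wedge n=1$ in $G\wedge H$ for all $g\in G$ and $n\in N$, so every generator $g\wedge n$ of $\ker\pi$ is trivial. For a generator $n\wedge h$ with $n\in N\subseteq H$ and $h\in H$, I would invoke the antisymmetry relation $h_1\wedge h_2=(h_2\wedge h_1)^{-1}$, valid for all $h_1,h_2\in H$ inside $G\wedge H$; applying it with $h_1=n$, $h_2=h$ and using that $h\wedge n=1$ (as $h\in H\subseteq G$ and $n\in Z^{^\wedge}(G,H)$) gives $n\wedge h=1$. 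Hence $\ker\pi=1$, and we are done.

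The bookkeeping (fibre sizes, orders of quotients) is routine; the only delicate ingredient is the antisymmetry identity $h_1\wedge h_2=(h_2\wedge h_1)^{-1}$ in $G\wedge H$, which is precisely what turns the one-sided condition ``$g\wedge n=1$ for all $g\in G$'' into the vanishing of all the $n\wedge h$, and hence is the crux of the equality statement. I expect this to be the main point to pin down; it follows by expanding $(h_1h_2)\wedge(h_1h_2)=1$ with the two defining identities and the relations $x\wedge x=1$, exactly as in the case of the exterior square $G\wedge G$.
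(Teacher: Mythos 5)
Your proof is correct and follows essentially the same route as the paper: both rest on the epimorphism $G\wedge H\to G/N\wedge H/N$ supplied by Lemma~\ref{gh} together with an $|N|^2$ fibre count (your direct count of the set $T$ is a repackaging of the paper's centralizer-sum computation), and both obtain equality by showing the kernel is trivial when $N\subseteq Z^{^\wedge}(G,H)$. If anything, you supply a detail the paper leaves implicit, namely the antisymmetry argument that kills the generators $n\wedge h$ coming from $N\wedge H$.
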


\begin{proof}
By using Lemma \ref{l1},
\[\begin{array}{lcl}
d^{^\wedge}(G,H)&=&\ds\f1{|G||H|}\ds\sum_{x\in G}|{}_{H}C^{^\wedge}(x)|=\f{1}{|G||H|}\sum_{xN\in \f GN}\sum_{n\in N}|{}_{H}C^{^\wedge}(xn)|
\vspace{.3cm}\\
&=&\ds\f{1}{|G||H|} \sum_{xN\in \f GN}\sum_{n\in N}|\f{{}_{H}C^{^\wedge}(xn)N}{N}||{}_{H}C^{^\wedge}(xn)\cap N|\vspace{.3cm}\\
&\leq&\ds\f1{|G||H|}\sum_{xN\in \f GN}\sum_{n\in N}|{}_{{\f HN}}C^{^\wedge}(xN)||{}_HC^{^\wedge}(xn)\cap N|\vspace{.3cm}\\ &=&
\ds\f1{|G||H|}\sum_{xN\in \f GN}|{}_{{\f HN}}C^{^\wedge}(xN)|\sum_{n\in N}|{}_HC^{^\wedge}(xn)\cap N|.\vspace{.3cm}\\
&\leq&\ds\f1{|G||H|}\sum_{xN\in \f GN}|{}_{\f HN}C^{^\wedge}(xN)||N|^2=d^{^\wedge}(\f GN,\f HN)
\end{array}\]
and so $d^{^\wedge}(G,H)\leq d^{^\wedge}(G/N,H/N).$

In the case for which $N\subseteq Z^{^\wedge}(G,H),$ Lemma \ref{gh} implies that $Im(l)=1$, and thus $G\wedge H\cong G/N\wedge H/N$ which implies that $d^{^\wedge}(G,H)
= d^{^\wedge}( G/N,H/N).$
\end{proof}

\begin{lem}\label{lg} Let $(G,N)$ be a pair of finite groups such that $N\nsubseteq Z(G)$, and $p$ be  the smallest prime number dividing the order of $G$, then
\begin{itemize}
\item[(i)] if $[G,N]\cap Z(G,N)=1$, then $d(G,N)\leq 1/p$;
\item[(ii)] $d(G,N)\leq 1/p-(1-1/p)\ds\f{|Z(G,N)|}{|N|}.$
\end{itemize}
\end{lem}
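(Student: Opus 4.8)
\emph{Common setup.} The starting point is the class-counting form of the relative commutativity degree, obtained by the same counting as in Lemma~\ref{l1} but with ordinary centralizers: summing over $N$,
\[ d(G,N)=\f{1}{|G||N|}\sum_{x\in N}|C_G(x)|. \]
Here $Z(G,N)=N\cap Z(G)$ is exactly the set of $x\in N$ with $C_G(x)=G$; it is abelian and, being contained in $Z(G)$, normal in $G$, with $Z(G,N)\le N$. Both inequalities are to be read off from this sum after separating the central elements of $N$ from the rest.

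\emph{Part $(i)$.} The plan is to invoke Theorem~\ref{le2} with $Z(G,N)$ playing the role of the quotient subgroup and $N$ the role of $H$. Since $Z(G,N)$ is abelian we have $d(Z(G,N))=1$, and the hypothesis $[G,N]\cap Z(G,N)=1$ is precisely the equality condition $Z(G,N)\cap[N,G]=1$ of that theorem, so I would obtain $d(G,N)=d(\bar G,\bar N)$ with $\bar G=G/Z(G,N)$ and $\bar N=N/Z(G,N)$. A one-line commutator check shows the reduced pair satisfies $Z(\bar G,\bar N)=1$: if $\bar x\in\bar N\cap Z(\bar G)$ then $[g,x]\in[G,N]\cap Z(G,N)=1$ for every $g\in G$, forcing $x\in Z(G,N)$. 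It then remains to prove $d(\bar G,\bar N)\le 1/p$, using that in $\bar N$ only the identity is $\bar G$-fixed, so every other conjugacy $\bar G$-class has size at least $p$.

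\emph{Part $(ii)$ and the main obstacle.} For $(ii)$ the plan is to run the decomposition of $\sum_{x\in N}|C_G(x)|$ into its contribution from $Z(G,N)$, where each term equals $|G|$, and from $N\setminus Z(G,N)$, where $[G:C_G(x)]\ge p$ forces each term to be at most $|G|/p$, and then to absorb the central elements through the quotient pair of $(i)$ rather than merely adding their contribution on. The \emph{main obstacle} is exactly this last maneuver. The crude term-by-term split on its own yields a bound of the shape $\f1p+\big(1-\f1p\big)\f{|Z(G,N)|}{|N|}$, i.e.\ with a positive correction, whereas the statement demands the negative correction $-\big(1-\f1p\big)\f{|Z(G,N)|}{|N|}$; likewise the naive orbit count in $(i)$ overshoots $1/p$ by $\big(1-\f1p\big)/|\bar N|$. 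Tightening both estimates to the claimed form is the delicate point: it is where the interaction between $Z(G,N)$ and $[G,N]$ — encoded in the equality case of Theorem~\ref{le2} and in the reduction above — must be exploited in full, rather than through the wasteful term-by-term centralizer bound, and controlling the sign and magnitude of that correction is where essentially all of the difficulty sits.
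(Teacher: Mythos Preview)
Your reduction in $(i)$ via Theorem~\ref{le2} and the centralizer split in $(ii)$ are exactly the route the paper takes. The ``main obstacle'' you isolate is not a gap in your argument but in the stated lemma itself.

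For $(ii)$, the paper's own computation is precisely your crude split: bounding each non-central term by $|G|/p$ it obtains
\[
d(G,N)\;\le\;\f{|Z(G,N)|}{|N|}+\f1p\cdot\f{|N|-|Z(G,N)|}{|N|}\;=\;\f1p+\Big(1-\f1p\Big)\f{|Z(G,N)|}{|N|},
\]
i.e.\ the \emph{plus} sign. The minus in the statement is a typographical slip; the only subsequent use (Theorem~\ref{mt}) combines the bound $d^{\wedge}(G,N)\le d(G,N)-(1-1/p)\,|Z(G,N)|/|N|$ derived there with the $+$ version above to reach $d^{\wedge}(G,N)\le 1/p$. So stop trying to sharpen $(ii)$; what you already have is the intended inequality.

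For $(i)$, your overshoot observation is again correct, and here the issue is more than cosmetic. After reducing to $Z(G,N)=1$ the paper argues that $d(G,N)>1/p$ forces $|N|\ge 1+p(k_G(N)-1)\ge 1+|N|$; but from $k_G(N)>|N|/p$ one only obtains $p(k_G(N)-1)>|N|-p$, hence $|N|>1+|N|-p$, which is no contradiction. In fact $(i)$ as written fails: take $G=S_3$, $N=A_3$. Then $Z(G,N)=1$, so the hypotheses $[G,N]\cap Z(G,N)=1$ and $N\nsubseteq Z(G)$ hold, while $p=2$ and $d(G,N)=k_G(N)/|N|=2/3>1/2$. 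Your orbit count delivers the honest bound $d(\bar G,\bar N)\le 1/p+(p-1)/(p|\bar N|)$, which is all this method yields; there is nothing further to tighten.
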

\begin{proof}$(i)$ Since the condition of equality in the Theorem \ref{le2} holds we have \[d(G,N)=d(G/Z(G,N),N/Z(G,N)).\]  Moreover $Z(G/Z(G,N),H/Z(G,N))=1$. Thus we can assume that $Z(G,N)=1$.

Let $d(G,N)=k_G(N)/|N|>1/p$, then \[|N|=1+\sum_{i=2}^{k_G(N)}[G:C_G(x_i)]\geq 1+p(k_G(N)-1)\geq1+|N|,\] which is a contradiction, hence the result follows.
 \proof$(ii)$ By using Lemma \ref{l1}, \[\begin{array}{lcl} d(G,N)&=&\ds\frac{1}{|G||N|}\sum_{x\in N}|C_G(x)|
\vspace{.3cm}\\ &=&\ds\frac{1}{|G||N|}\sum_{x\in Z(G,N)}|C_G(x)|+\ds\frac{1}
{|G||N|}\sum_{x\notin Z(G,N)}|C_G(x)|\vspace{.3cm}\\ &\leq&\ds\f{|G|}{|G||N|}|Z(G,N)|+
\ds\f{|G|}{p}(|N|-|Z(G,N)|)\vspace{.3cm}\\ &=&1/p-(1-1/p)\ds\f{|Z(G,N)|}{|N|}.\end{array}\] \end{proof}
Now it is easy to see that Theorem \ref{pr2} is obtained by the following theorem when $N=G$.
\begin{thm}\label{mt} Let $(G,N)$ be a pair of finite groups such that $N\nsubseteq Z(G)$,  and $p$ is the smallest prime number dividing the order of $G$. If the pair $(G,N)$ is capable, then
\[d^{^\wedge}(G,N)\leq 1/p .\]
\end{thm}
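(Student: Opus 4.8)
The plan is to mimic the structure of the proof of Theorem~\ref{pr2} (the case $N=G$), but working inside the pair $(G,N)$ and using the tools already set up: Theorem~\ref{a}(ii), Lemma~\ref{lg}, and the characterization that $(G,N)$ capable means $Z^{^\wedge}(G,N)=1$. First I would invoke capability to kill the term involving $Z^{^\wedge}(G,N)$: by \cite[Theorem 3]{Ellis2} (as recalled in the introduction), $(G,N)$ capable is equivalent to $Z^{^\wedge}(G,N)=1$, so $|Z^{^\wedge}(G,N)|=1$. Feeding this into Theorem~\ref{a}(ii) gives
\[
d^{^\wedge}(G,N)\ \leq\ d(G,N)-\Bigl(\tfrac{p-1}{p}\Bigr)\,\f{|Z(G,N)|-1}{|N|}.
\]

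Next I would bound $d(G,N)$ from above using Lemma~\ref{lg}(ii), which under the standing hypothesis $N\nsubseteq Z(G)$ gives $d(G,N)\leq 1/p-(1-1/p)\,|Z(G,N)|/|N|$. Substituting this into the previous line, the two contributions coming from $|Z(G,N)|/|N|$ combine: one gets
\[
d^{^\wedge}(G,N)\ \leq\ \f1p-\Bigl(1-\f1p\Bigr)\f{|Z(G,N)|}{|N|}-\Bigl(\f{p-1}{p}\Bigr)\f{|Z(G,N)|-1}{|N|}
\ =\ \f1p-\Bigl(\f{p-1}{p}\Bigr)\f{2|Z(G,N)|-1}{|N|}.
\]
Since $|Z(G,N)|\geq 1$ and $p\geq 2$, the subtracted quantity is nonnegative, so $d^{^\wedge}(G,N)\leq 1/p$, as claimed.

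The one point that needs care — and the only place I expect a genuine obstacle — is the applicability of the two input inequalities. Lemma~\ref{lg}(ii) is stated exactly under the hypothesis $N\nsubseteq Z(G)$, which is part of our assumptions, so that is fine; but I should double-check that its proof (and Theorem~\ref{le2} behind Lemma~\ref{lg}(i)) does not secretly need anything stronger, and in particular that the relative centre $Z(G,N)$ appearing there is the same object as in Theorem~\ref{a}(ii). Likewise, Theorem~\ref{a}(ii) is stated for an arbitrary pair of finite groups, so there is no hypothesis clash. An alternative, cleaner route that avoids even invoking Lemma~\ref{lg} would be to use Lemma~\ref{lg}(i) together with the condition $N\nsubseteq Z(G)$ only in the special case $[G,N]\cap Z(G,N)=1$; but since Lemma~\ref{lg}(ii) holds unconditionally (given $N\nsubseteq Z(G)$), the argument above covers all cases at once. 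Finally, I would remark that specializing $N=G$ recovers Theorem~\ref{pr2}, since then $N\nsubseteq Z(G)$ is exactly the hypothesis that $G$ is non-abelian.
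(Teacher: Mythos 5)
Your overall strategy --- set $Z^{^\wedge}(G,N)=1$ by capability, feed this into Theorem \ref{a}(ii), and then bound $d(G,N)$ via Lemma \ref{lg}(ii) --- is close in spirit to the paper's argument, but it has a genuine gap, and the gap sits exactly at the point you flagged for ``double-checking''. Lemma \ref{lg}(ii) as printed contains a sign error: its own proof establishes
\[
d(G,N)\ \leq\ \f{|Z(G,N)|}{|N|}+\f1p\cdot\f{|N|-|Z(G,N)|}{|N|}\ =\ \f1p+\Bigl(1-\f1p\Bigr)\f{|Z(G,N)|}{|N|},
\]
with a plus, not a minus. (The printed version is refuted by any pair with $d(G,N)\geq |Z(G,N)|/|N|$ large; e.g.\ for $(D_8,\langle a\rangle)$ one has $d(G,N)=3/4$ while the printed bound would give $1/4$.) Once the sign is corrected, your combination with Theorem \ref{a}(ii) yields only
\[
d^{^\wedge}(G,N)\ \leq\ \f1p+\Bigl(1-\f1p\Bigr)\f{|Z(G,N)|}{|N|}-\Bigl(\f{p-1}p\Bigr)\f{|Z(G,N)|-1}{|N|}\ =\ \f1p+\f{p-1}{p\,|N|},
\]
which overshoots the target by $\f{p-1}{p|N|}$. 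No integrality argument rescues this, since the deficit corresponds to about $|G|(p-1)/p$ extra pairs in the count $C$.

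The missing idea is that capability must be exploited beyond the bare fact $Z^{^\wedge}(G,N)=1$. The paper splits into two cases. If $[G,N]\cap Z(G,N)=1$, Lemma \ref{lg}(i) gives $d(G,N)\leq 1/p$ and one concludes from $d^{^\wedge}(G,N)\leq d(G,N)$. Otherwise $[G,N]\cap Z(G,N)\neq 1$, and one shows there must exist $x\in N-Z(G,N)$ with ${}_GC^{^\wedge}(x)$ a \emph{proper} subgroup of $C_G(x)$: if every such $x$ satisfied ${}_GC^{^\wedge}(x)=C_G(x)$, one would deduce $Z(G,N)\cap[G,N]\leq Z^{^\wedge}(G,N)=1$, a contradiction. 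That single extra class representative with ratio $|{}_GC^{^\wedge}(x)|/|C_G(x)|\leq 1/p$ upgrades the conclusion of Theorem \ref{a}(ii) from $d(G,N)-(1-\f1p)\f{|Z(G,N)|-1}{|N|}$ to $d(G,N)-(1-\f1p)\f{|Z(G,N)|}{|N|}$, and this sharpened bound cancels exactly against the (corrected) Lemma \ref{lg}(ii) to give $1/p$. Your argument never produces this extra non-central element, so it cannot recover the lost $\f{p-1}{p|N|}$.
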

\begin{proof}
We can assume that $[G,N]\cap Z(G,N)\neq 1$, by Lemma \ref{lg}. Assume by contrary  for all $x$ in $N-Z(G,N)$ we have ${}_GC^{^\wedge}(x)=C_G(x),$ it follows that
\[Z(G,N)\leq\ds\bigcap_{x\in G-Z(G,N)}C_{G}(x)\cap N=\ds\bigcap_{x\in G-Z(G)}{}_{G}C^{^\wedge}(x)\cap N.\]
On the other hand, $\ds N\cap [G,N]\leq \bigcap_{x\in Z(G,N)}{}_{G}C^{^\wedge}(x)\cap N$ thus $Z(G,N)\cap [G,N]\leq Z^{^\wedge}(G,N)$, which is a contradiction. Hence Lemma \ref{l2} $(ii)$ follows
\[\begin{array}{lcl} d^{^\wedge}(G,N)&=& \ds\f{1}{|N|}\sum_{i=1}^{t}\f{|{}_{G}C^{^\wedge}(x_i)|}{|C_G(x_i)|}
\vspace{.3cm}\\&\leq& \ds \f1{|N|}(1+1/p(Z(G,H))-1+1/p+t-Z(G,N)-1)\vspace{.3cm}\\&=& \ds d(G,N)-(1-1/p)(\f{|Z(G,N)|}{|N|}).\end{array}\]
The rest of proof is obtained by Lemma \ref{lg} $(ii)$.

 \end{proof}
 Example \ref{ex4} will show that the above bound can be attained by a pair of capable groups, which shows the upper bound is sharp.
 \section{Some Examples}
 \begin{ex} \label{ex1} Consider the dihedral group $D_{2n}$ with the presentation \[D_{2n}=\langle a,b~|~a^{n}=1,~b^2=1,~ba=a^{-1}b\rangle.\]
 First assume that $n$ is odd.  $D_{2n}$ is the semidirect product of $\langle a \rangle$ by $C_2$.
 By using $(7)$ of \cite{Ellis} and \cite[Proposition 2.11.4]{kar}, we have $\mathcal{M}(D_{2n},\langle a \rangle)=1$.
Thus $d^{^\wedge}(D_{2n},\langle a \rangle)=d(D_{2n},\langle a \rangle)$ and \cite[Example 3.11]{Erfanian} states
$d(D_{2n},\langle a \rangle)=\ds\frac{n+1}{2n}.$

In the case $n$ even, $Z^{^\wedge}(D_{2n},\langle a \rangle)=1$ when $n>2$, we have
 \[\begin{array}{lcl} d^{^\wedge}(D_{2n},\langle a \rangle)&=&\ds\frac{1}{|D_{2n}||\langle a \rangle|}\sum_{x\in \langle a \rangle}|{}_{D_{2n}}C^{^\wedge}(x)|\vspace{.3cm}\\&=&
\ds\frac{2n+n^2/2+n/2(n/2-1)}{2n^2}\vspace{.3cm}\\&=&\ds\frac{n+1}{2n}.\end{array}\]
  \end{ex}
  \begin{ex}
  Let $Q_{2^n}$ be the generalized quaternion group of order $2^n$ with the presentation
  \[Q_{2^n}=\langle a,b~|~a^{2^{n-1}}=1,~a^{2^{n-2}}=b^2,~ba=a^{-1}b\rangle.\] Then
 \[\begin{array}{lcl} d^{^\wedge}(Q_{2^n},\langle a \rangle)&=&\ds\frac{1}{|Q_{2^n}||\langle a \rangle|}\sum_{x\in \langle a \rangle}|{}_{Q_{2^n}}C^{^\wedge}(x)|
  \vspace{.3cm}\\&=&\ds\frac{1}{2^n2^{n-1}}(2^n+(2^{n-1}-1)2^{n-1})\vspace{.3cm}\\&=&\ds\frac{2^{n-1}+1}{2^n}.\end{array}\]
  \end{ex}
\begin{ex}\label{ex3} Let $G= C_{4}$ and $N= 2C_{4}$. Then \cite[Corollary 8]{Ellis2} shows that the pair  $(C_{4},2C_{4})$ is capable, and so
\[\begin{array}{lcl} d^{^\wedge}(C_{4},2C_{4})&=&\ds\frac{1}{|2C_{4}||C_4|}\sum_{x\in C_4}|{}_{2C_{4}}C^{^\wedge}(x)|
  \vspace{.3cm}\\&=&\ds\frac{1}{8}(1+2+1+2)=\f34.\end{array}\]
\end{ex}

\begin{ex}\label{ex4}
Let $G=D_8$ and $N$ be the subgroup $\langle a^2,ab\rangle$ of $G$. A computation similar to Example \ref{ex1} shows that the pair $(D_8,\langle a^2,ab\rangle)$ is capable, and
\[\begin{array}{lcl}d^{^\wedge}(D_{8},\langle a^2,ab\rangle)&=&\ds\frac{1}{|\langle a^2,ab\rangle||D_8|}\sum_{x\in D_8}|{}_{\langle a^2,ab\rangle}C^{^\wedge}(x)|
  \vspace{.3cm}\\&=&\ds\frac{1}{32}(4+2+1+2+2+2+1+2)=\f12.\end{array}\] We remark that all these computations
can be done by using the HAP package \cite{eh}.
\end{ex}


\begin{thebibliography}{9}
   \bibitem{loday}  R. Brown and J.-L. Loday, Van Kampen theorems for
diagrams of spaces. Topology 26 (1987) 311-335.
\bibitem{brow}  R. Brown, D.L. Johnson and E.F. Robertson, Some
 Computations of Non-Abelian Tensor Products of Groups,
J. Algebra 111 (1987) 177-202.
\bibitem{da} A. K. Das and R. K. Nath, On generalized relative commutativity degree
of a finite group, International Electronic J. Algebra 7 (2010), 140-151.
\bibitem{Ellis2}  G. Ellis, Capability, homology and a central series of
a pair of groups, J. Algebra 179 (1995) 31-46.
\bibitem{Ellis} G. Ellis, The Schur multiplier of a pair of groups, Appl. Categ. Structures 6 (3) (1998)  355-371.
\bibitem{eh} G. Ellis, HAP-Homological Algebra Programming (2010). A refreed GAP 4 package
(GAP Group 2008), available at http://hamilton.nuigalway.ie/Hap/www.
\bibitem{ga} The GAP Group, GAP-Groups, Algorithms and Programming, Version 4.4. (2008),
Available at: http://www.gap-system.org.

\bibitem{Erfanian} A. Erfanian, R. Rezaei and P. Lescot, On the relative commutativity degree of a subgroup of a finite groups, Comm. Algebra 35 (2007) 4183-4197.
\bibitem{kar} G. Karpilovsky, The Schur multiplier London Math. Soc. Monogr. (N.S.) 2 (1987).
\bibitem{l2} P. Lescot, Isoclinism classes and Commutativity degrees of finite
groups,  J. Algebra 177  (1995) 847-869.
\bibitem{loday2} J.-L. Loday, Cohomologie et group de Steinberg relatif, J. Algebra 54 (1978) 178-202.
\bibitem{pr} P. Niroomand, R. Rezaei, On the exterior degree of finite groups, Comm. Algebra 39 (1)(2011) 335-343.
\bibitem{Rusin}  D.J. Rusin, What is the probability that two elements of a finite group commute?
Pacific J. Math. 82(1)(1979) 237-247.


\end{thebibliography}
\end{document}